\newtheorem{theorem}{Theorem}[section]
\newtheorem{lemma}[theorem]{Lemma}
\newtheorem{proposition}[theorem]{Proposition}
\newtheorem{remark}[theorem]{Remark}
\numberwithin{equation}{section}
\begin{document}
\title[Ideals of the Toeplitz Algebra]{On a Class of Ideals of the Toeplitz Algebra on the Bergman Space of the Unit Ball}
\author{Trieu Le}
\address{Trieu Le, Department of Mathematics, State University of New York at Buffalo, Buffalo, NY 14260, USA}
\email{trieule@buffalo.edu}
\begin{abstract}
Let $\mathfrak{T}$ denote the full Toeplitz algebra on the Bergman space of the unit ball $\mathbb{B}_n.$ For each subset $G$ of $L^{\infty},$ let $\mathfrak{CI}(G)$ denote the closed two-sided ideal of $\mathfrak{T}$ generated by all $T_fT_g-T_gT_f$ with $f,g\in G.$ It is known that $\mathfrak{CI}(C(\overline{\mathbb{B}}_n))=\mathcal{K}$ - the ideal of compact operators and $\mathfrak{CI}(C(\mathbb{B}_n))=\mathfrak{T}.$ Despite these ``extremal cases'', $\mathfrak{T}$ does contain other non-trivial ideals. This paper gives a construction of a class of subsets $G$ of $L^{\infty}$ so that $\mathcal{K}\subsetneq\mathfrak{CI}(G)\subsetneq\mathfrak{T}.$
\end{abstract}
\subjclass[2000]{Primary 47B35; Secondary 47B47}
\keywords{Commutator ideals; Bergman spaces; Toeplitz operators}
\maketitle

\section{INTRODUCTION}

For any integer $n\geq 1,$ let $\mathbb{C}^{n}$ denote the Cartesian product of $n$ copies of $\mathbb{C}.$ For $z=(z_1,\ldots,z_n)$ and $w=(w_1,\ldots,w_n)$ in $\mathbb{C}^n,$ we write $\langle z,w\rangle=z_1\overline{w}_1+\cdots+z_n\overline{w}_n$ and $|z|=\sqrt{|z_1|^2+\cdots+|z_n|^2}$ for the inner product and the associated Euclidean norm. Let $\mathbb{B}_n$ denote the open unit ball which consists of all $z\in\mathbb{C}^n$ with $|z|<1.$ Let $\mathbb{S}_n$ denote the unit sphere which consists of all $z\in\mathbb{C}^n$ with $|z|=1.$ For any subset $V$ of $\mathbb{B}_n$ we write ${\rm cl}(V)$ to denote the closure of $V$ as a subset of $\mathbb{C}^{n}$ with respect to the usual Euclidean metric. We write $\overline{\mathbb{B}}_n$ for the closed unit ball which is also ${\rm cl}(\mathbb{B}_n).$ Let $C(\mathbb{B}_n)$ (and $C(\overline{\mathbb{B}}_n)$) denote the space of all functions that are continuous in the open unit ball (respectively, the closed unit ball.)

Let $\nu$ denote the Lebesgue measure on $\mathbb{B}_n$ normalized so that $\nu(\mathbb{B}_n)=1$. Let $L^{2}=L^{2}(\mathbb{B}_n,\mathrm{d}\nu)$ and $L^{\infty}=L^{\infty}(\mathbb{B}_n,\mathrm{d}\nu).$ The Bergman space $L^{2}_{a}$ is the subspace of $L^2$ which consists of all analytic functions. The normalized reproducing kernels for $L_{a}^{2}$ are of the form
\begin{equation*}
k_z(w)=(1-|z|^2)^{(n+1)/2}(1-\langle w,z\rangle )^{-n-1},\ |z|,|w|<1.
\end{equation*}
We have $\|k_z\|=1$ and $\langle g,k_z\rangle =(1-|z|^2)^{(n+1)/2}g(z)$ for all $g\in L_{a}^{2}.$

The orthogonal projection from $L^2$ onto $L^2_a$ is given by
\begin{equation*}
(Pg)(z)=\int\limits_{\mathbb{B}_n}\dfrac{g(w)}{(1-\langle z,w\rangle )^{n+1}}\ \mathrm{d}\nu(w),\ g\in L^2,\ z\in \mathbb{B}_n.
\end{equation*}

For any $f\in L^{\infty}$ the Toeplitz operator $T_{f}:L_{a}^{2}\longrightarrow L_{a}^{2}$ is defined by $T_fh=P(fh)$ for $h\in L^2_a.$ We have
\begin{equation}\label{eqn-2}
(T_fh)(z) = \int\limits_{\mathbb{B}_n}\dfrac{f(w)h(w)}{(1-\langle z,w\rangle)^{n+1}}\ {\rm d}\nu(w)
\end{equation}
for $h\in L^2_a$ and $z\in\mathbb{B}_n.$

For all $f\in L^{\infty}, \|T_f\|\leq\|f\|_{\infty}$ and $T_f^{*}=T_{\overline{f}}.$ In contrast with Toeplitz operators on the Hardy space of the unit sphere, there are $f\in L^{\infty}$ so that $\|T_f\|<\|f\|_{\infty}$. Also, since $T_f$ is an integral operator by equation \eqref{eqn-2}, we see that if $f$ vanishes almost everywhere in the complement of a compact subset of $\mathbb{B}_n$ then $T_f$ is compact.

Let $\mathfrak{B}(L_a^2)$ be the C$^{*}-$algebra of all bounded linear operators on $L_a^2$. Let $\mathcal{K}$ denote the ideal of $\mathfrak{B}(L^2_a)$ that consists of all compact operators. The full Toeplitz algebra $\mathfrak{T}$ is the C$^{*}-$subalgebra of $\mathfrak{B}(L^2_a)$ generated by $\{T_f: f\in L^{\infty}\}.$ For any subset $G$ of $L^{\infty},$ let $\mathfrak{I}(G)$ denote the closed two-sided ideal of $\mathfrak{T}$ generated by all $T_f$ with $f\in G.$ Let $\mathfrak{CI}(G)$ denote the closed two-sided ideal of $\mathfrak{T}$ generated by all commutators $[T_f, T_g]=T_fT_g-T_gT_f$ with $f,g\in G.$ A result of Lewis Coburn \cite{Coburn} in the 70's showed that $\mathfrak{CI}(C(\overline{\mathbb{B}}_n))=\mathcal{K}.$ Recently Daniel Su{\'a}rez \cite{Suarez2004} showed that $\mathfrak{CI}(L^{\infty})=\mathfrak{T}$ for the one-dimensional case. This result has been generalized by the author \cite{Le2006} to all $n\geq 1.$ In fact, we are able to show that $\mathfrak{CI}(G)=\mathfrak{T}$ for certain subsets $G$ of $L^{\infty}.$ We can take $G=\{f\in C(\mathbb{B}_n)\cap L^{\infty}: f\text{ vanishes on }\mathbb{B}_n\backslash E\}$ where $\nu(E)$ can be as small as we please. We can also take $G=\{f\in L^{\infty}: f\text{ vanishes on }\mathbb{B}_n\backslash E\}$ where $E$ is a closed nowhere dense subset of $\mathbb{B}_n$ with $\nu(E)$ as small as we please. From these results, one may ask the question: does $\mathfrak{T}$ contain any non-trivial ideal besides $\mathcal{K}$? The purpose of this paper is to show that $\mathfrak{T}$ does contain other ideals despite the above ``extremal cases''. The main result of the paper is the following theorem.

\begin{theorem}[Main Theorem] To every closed subset $F$ of $\mathbb{S}_n$ corresponds a subset $G_{F}$ of $L^{\infty}$ so that the followings hold true:
\begin{enumerate}
     \item $\mathfrak{I}(G_{\emptyset})=\mathcal{K}$ and $\mathfrak{CI}(G_{\mathbb{S}_n}\cap C(\mathbb{B}_n))=\mathfrak{T}.$
     \item If $F_1, F_2$ are closed subsets of $\mathbb{S}_n$ and $F_1\subset F_2$ then $G_{F_1}\subset G_{F_2}.$
     \item If $F_1, F_2$ are closed subsets of $\mathbb{S}_n$ and $F_2\backslash F_1\neq\emptyset$ then $\mathfrak{CI}(G_{F_2}\cap C(\mathbb{B}_n))\backslash\mathfrak{I}(G_{F_1})\neq\emptyset.$ In particular, if $\emptyset\neq F\subsetneq\mathbb{S}_n$ then $\mathcal{K}\subsetneq\mathfrak{CI}(G_F\cap C(\mathbb{B}_n))\subseteq\mathfrak{I}(G_F)\subsetneq\mathfrak{T}.$
\end{enumerate}
\end{theorem}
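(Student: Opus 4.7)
The natural candidate is to let $G_F$ be the set of all $f\in L^{\infty}$ for which there is an open set $W\subset\mathbb{C}^n$ with $\mathbb{S}_n\setminus F\subset W$ and $f=0$ almost everywhere on $W\cap\mathbb{B}_n$. Property (ii) is then immediate, since enlarging $F$ only weakens the vanishing requirement. For (i), when $F=\mathbb{S}_n$ one may take $W=\emptyset$, so $G_{\mathbb{S}_n}=L^{\infty}$ and $G_{\mathbb{S}_n}\cap C(\mathbb{B}_n)=C(\mathbb{B}_n)\cap L^{\infty}$; the equality $\mathfrak{CI}(G_{\mathbb{S}_n}\cap C(\mathbb{B}_n))=\mathfrak{T}$ then follows directly from the result of the author recalled in the introduction. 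When $F=\emptyset$, the elements of $G_{\emptyset}$ vanish outside a compact subset of $\mathbb{B}_n$, and the discussion following \eqref{eqn-2} makes each such $T_f$ compact (indeed Hilbert--Schmidt); hence $\mathfrak{I}(G_{\emptyset})\subset\mathcal{K}$, while the reverse inclusion follows from the simplicity of $\mathcal{K}$ together with the existence of nonzero $T_f$ with $f\in G_{\emptyset}$.

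The main content is (iii). Fix $\zeta\in F_2\setminus F_1$. The plan is to use, at each boundary point $\xi\in\mathbb{S}_n$, the \emph{radial Berezin functional}
\[
\mu_\xi(A):=\limsup_{r\to 1^-}|\langle Ak_{r\xi},k_{r\xi}\rangle|,\qquad A\in\mathfrak{B}(L^{2}_a),
\]
and to establish two things: (a) $\mu_\xi(A)=0$ for every $A\in\mathfrak{I}(G_{F_1})$ and every $\xi\in\mathbb{S}_n\setminus F_1$; (b) some explicit $A_0\in\mathfrak{CI}(G_{F_2}\cap C(\mathbb{B}_n))$ satisfies $\mu_{\zeta}(A_0)>0$. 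Together with $\zeta\in\mathbb{S}_n\setminus F_1$, (a) and (b) yield $A_0\in\mathfrak{CI}(G_{F_2}\cap C(\mathbb{B}_n))\setminus\mathfrak{I}(G_{F_1})$.

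For (a), the starting observation is: if $f\in G_{F_1}$ and $\xi\in\mathbb{S}_n\setminus F_1$, then $f$ vanishes in an open neighborhood of $\xi$ in $\overline{\mathbb{B}}_n$; since the Berezin measures $|k_{r\xi}|^{2}d\nu$ concentrate weak-$\!^*$ at $\xi$ on $\overline{\mathbb{B}}_n$,
\[
\|T_f k_{r\xi}\|^{2}\le\int_{\mathbb{B}_n}|f|^{2}|k_{r\xi}|^{2}\,d\nu\longrightarrow 0
\]
as $r\to 1^{-}$, and likewise $\|T_f^{*}k_{r\xi}\|\to 0$. From here I propagate the vanishing of $\mu_{\xi}$ to arbitrary products $A_1T_{f_1}A_2$ with $A_1,A_2\in\mathfrak{T}$ and $f_1\in G_{F_1}$ by placing a Cauchy--Schwarz split at the factor $T_{f_1}$, and then pass to the closed ideal using the uniform bound $|\tilde A(z)|\le\|A\|$, which makes radial Berezin limits norm-continuous in $A$.

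For (b), since $\zeta\in F_2$, one can fit a sequence of pairwise disjoint Euclidean balls $B(r_k\zeta,\rho_k)\subset\mathbb{B}_n$ with $r_k\to 1^-$ and $\rho_k\to 0$, and build real-valued $f,g\in G_{F_2}\cap C(\mathbb{B}_n)$ supported in the union of these balls in such a way that the self-adjoint operator $i[T_f,T_g]$ has $|\langle i[T_f,T_g]k_{r_k\zeta},k_{r_k\zeta}\rangle|$ bounded below along a subsequence; an asymptotic calculation using the rescaling of the Bergman kernel in $B(r_k\zeta,\rho_k)$ makes this explicit. The main obstacle I foresee is the propagation step in (a): for a generic $B\in\mathfrak{T}$ the vector $Bk_{r\xi}$ need not concentrate near $\xi$, so closing up the argument from the generators (where the concentration is immediate) to all of the closed ideal will require careful quantitative bookkeeping---likely an approximation of the ambient factors $A_1,A_2$ by polynomials in Toeplitz operators with symbols continuous at $\xi$, together with a remainder estimate that is uniform as $r\to 1^{-}$.
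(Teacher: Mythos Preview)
Your definition of $G_F$ and the overall Berezin-transform strategy are reasonable, and parts (i), (ii) go through. The genuine gap is exactly where you flag it: the propagation step in (a). Your proposed fix---approximate the ambient factors $A_1,A_2$ by polynomials in Toeplitz operators with symbols continuous at $\xi$---cannot work, because $\mathfrak{T}$ is generated by $T_g$ with arbitrary $g\in L^\infty$, and such $T_g$ are not norm-approximable by Toeplitz operators with symbols continuous near $\xi$. So there is no density argument that lets you trade a bad $A_2$ for a good one.

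The paper solves this by a different mechanism: rather than approximating $A_2$, it inserts a continuous cutoff on the \emph{right} and commutes it to the \emph{left} modulo compacts. Concretely, for $A_2=T_{g_1}\cdots T_{g_l}$ and a continuous $\eta$ on $\overline{\mathbb{B}}_n$ with $\eta=0$ in a neighborhood of $\xi$ and $\eta=1$ near the boundary-support of $f$, write
\[
T_f A_2 \;=\; T_f A_2 T_\eta \;+\; T_f A_2 T_{1-\eta}.
\]
Coburn's theorem gives $[T_g,T_{1-\eta}]\in\mathcal{K}$ for every $g\in L^\infty$, so $T_f A_2 T_{1-\eta}=T_{f(1-\eta)}A_2+K$ with $K$ compact; since $f(1-\eta)$ is compactly supported in $\mathbb{B}_n$, $T_{f(1-\eta)}$ is compact too. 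Thus $T_f A_2=T_f A_2 T_\eta + (\text{compact})$. Now $T_\eta k_{r\xi}\to 0$ because $\eta$ vanishes near $\xi$ and the mass of $|k_{r\xi}|^2\,d\nu$ concentrates there; and (compact)$\,k_{r\xi}\to 0$ since $k_{r\xi}\to 0$ weakly. This yields $\|T_f A_2 k_{r\xi}\|\to 0$ with no hypothesis on the $g_j$, after which your Cauchy--Schwarz split and the norm-continuity of $\mu_\xi$ finish (a). In short: the missing idea is Coburn's compact-commutator trick to move a continuous cutoff past the generic $L^\infty$ factors, not an approximation of those factors. (For (b), the paper avoids your asymptotic Berezin computation by taking $T=\sum_m U_{z_m}[T_f,T_{\bar f}]^2 U_{z_m}^*$, so positivity gives the lower bound for free, and Su\'arez's separated-support proposition places $T$ in $\mathfrak{CI}(G_{F_2}\cap C(\mathbb{B}_n))$; your single-commutator route can also be made to work, but you will need the balls to be pseudo-hyperbolically separated and to control the off-diagonal interactions.)
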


In the rest of the paper we will provide some preliminaries and basic results in Section \ref{section-2} and Section \ref{section-3}. In Section \ref{section-4}, we give the proof for our Main Theorem.

\section{PRELIMINARIES}\label{section-2}
For any $z\in \mathbb{B}_n,$ let $\varphi_{z}$ denote the Mobius automorphism of $\mathbb{B}_n$ that interchanges $0$ and $z.$ For any $z, w\in \mathbb{B}_n,$ let $\rho(z,w)=|\varphi_{z}(w)|.$ Then $\rho$ is a metric on $\mathbb{B}_n$ (called the pseudo-hyperbolic metric) which is invariant under the action of the group of automorphisms Aut($\mathbb{B}_n$) of $\mathbb{B}_n$. For any $w\in\mathbb{B}_n, \rho(z,w)\rightarrow 1$ as $|z|\rightarrow 1.$ These properties and the following inequality can be proved by using the identities in \cite[Theorem 2.2.2]{Rudin1980}.

For any $z,w,u\in\mathbb{B}_n,$ we have
\begin{equation}\label{eqn-4}
\rho(z,w) \leq\dfrac{\rho(z,u)+\rho(u,w)}{1+\rho(z,u)\rho(u,w)}.
\end{equation}

For any $a$ in $\mathbb{B}_n$ and any $0<r<1,$ define
\begin{equation*}
B(a,r) = \{z\in\mathbb{B}_n: |z-a|<r\},
\end{equation*}
and
\begin{equation*}
E(a,r) = \{z\in\mathbb{B}_n: \rho(z,a)<r\}.
\end{equation*}

Inequality \eqref{eqn-4} shows that if $z,w\in\mathbb{B}_n$ so that $E(z,r_1)\cap E(w,r_2)\neq\emptyset$ for some $0<r_1, r_2<1$ then
\begin{align*}
\rho(z,w)
& \leq\dfrac{\rho(z,u)+\rho(u,w)}{1+\rho(z,u)\rho(u,w)} \text{ (here $u$ is any element in $E(z,r_1)\cap E(w,r_2)$)}\\
& <\dfrac{r_1+r_2}{1+r_1r_2}.
\end{align*}

This implies that if $\rho(z,w)\geq\dfrac{r_1+r_2}{1+r_1r_2}$ then $E(z,r_1)\cap E(w,r_2)=\emptyset.$

\begin{lemma}\label{lemma-4} For any $0<r<1$ and $\zeta\in\mathbb{S}_n$ there is an increasing sequence $\{t_m\}_{m=1}^{\infty}\subset (0,1)$ so that $t_m\rightarrow 1$ as $m\rightarrow\infty$ and $E(t_k\zeta, r)\cap E(t_l\zeta, r)=\emptyset$ for all $k\neq l.$
\end{lemma}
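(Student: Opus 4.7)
The plan is to build the sequence $\{t_m\}$ inductively, using the observation made right before the lemma: if $\rho(z,w)\geq \sigma:=\frac{2r}{1+r^{2}}$, then $E(z,r)\cap E(w,r)=\emptyset$. Note that $\sigma<1$ since $0<r<1$. So it suffices to produce an increasing sequence $t_m\to 1$ with
\[
\rho(t_k\zeta,t_l\zeta)\geq \sigma \quad\text{for all } k\neq l.
\]

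The first step is to simplify $\rho$ on the radial slice. Both points $t_k\zeta$ and $t_l\zeta$ lie on the one-complex-dimensional slice $\{\lambda\zeta:\lambda\in\mathbb{D}\}$ of $\mathbb{B}_n$, which is invariant under the slice automorphism $\lambda\mapsto (\lambda-t_k)/(1-t_k\lambda)$. Using the identities in \cite[Theorem 2.2.2]{Rudin1980} (invoked right before the lemma), one checks that
\[
\rho(t_k\zeta,t_l\zeta)=\left|\frac{t_l-t_k}{1-t_kt_l}\right|,
\]
that is, $\rho$ restricted to the radial ray reduces to the familiar one-variable pseudo-hyperbolic distance.

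With this reduction, the construction is immediate. Pick $t_1\in(0,1)$ arbitrarily. Suppose $0<t_1<\cdots<t_m<1$ have been chosen so that $\rho(t_i\zeta,t_j\zeta)\geq\sigma$ for all $1\leq i<j\leq m$. For each fixed $k\leq m$, the function $t\mapsto (t-t_k)/(1-tt_k)$ is continuous on $(t_k,1)$ and tends to $1$ as $t\to 1^{-}$; since $\sigma<1$, there exists $\tau_k\in(t_m,1)$ with $(t-t_k)/(1-tt_k)\geq\sigma$ for all $t\in[\tau_k,1)$. Choose
\[
t_{m+1}\in\bigl[\max\{\tau_1,\ldots,\tau_m,\,1-\tfrac{1}{m+1}\},\,1\bigr).
\]
Then $t_{m+1}>t_m$, $t_{m+1}>1-\tfrac{1}{m+1}$, and $\rho(t_k\zeta,t_{m+1}\zeta)\geq\sigma$ for every $k\leq m$. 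This extends the induction and simultaneously forces $t_m\to 1$.

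There is no real obstacle here; the only substantive point is the identification of $\rho$ on the radial slice with the one-variable pseudo-hyperbolic distance, and once that is in place the inductive selection is routine because $\rho(z,w)\to 1$ as $|w|\to 1$ with $z$ fixed.
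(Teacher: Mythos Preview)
Your argument is correct and follows essentially the same inductive construction as the paper's proof, using the same threshold $\sigma=\tfrac{2r}{1+r^{2}}$ and the same device of forcing $t_{m+1}>1-\tfrac{1}{m+1}$ to guarantee $t_m\to 1$. The only difference is that you compute $\rho(t_k\zeta,t_l\zeta)$ explicitly on the radial slice, whereas the paper simply invokes the general fact (stated just before the lemma) that $\rho(z,w)\to 1$ as $|z|\to 1$; your extra computation is correct but not needed.
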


\begin{proof}
We will construct the sequence $\{t_m\}_{m=1}^{\infty}$ by induction. We begin by taking any $t_1$ in $(0,1).$ Suppose we have chosen $t_1<\cdots<t_m$ so that $1-\dfrac{1}{j}<t_j<1$ for all $1\leq j\leq m$ and $E(t_k\zeta, r)\cap E(t_l\zeta, r)=\emptyset$ for all $1\leq k<l\leq m,$ where $m\geq 1.$ Since $\rho(t\zeta,t_j)\rightarrow 1$ as $t\uparrow 1$ for all $1\leq j\leq m,$ we can choose $t_{m+1}\in (0,1)$ with $\max\{t_m, 1-\dfrac{1}{m+1}\}<t_{m+1}$ so that $\rho(t_{m+1}\zeta, t_j)>\dfrac{2r}{1+r^2}$ for all $1\leq j\leq m.$ It then follows that $E(t_{m+1}\zeta,r)\cap E(t_{j}\zeta,r)=\emptyset$ for all $1\leq j\leq m.$ Also, since $1-\dfrac{1}{m}<t_m<1$ for all $m,\ t_m\rightarrow 1$ as $m\rightarrow\infty.$
\end{proof}

\begin{lemma}\label{lemma-2} For any $0<r<1$ and any $\epsilon>0,$ there is a $\delta$ depending on $r$ and $\epsilon$ so that for all $\zeta\in\mathbb{S}_n$ and all $a\in\mathbb{B}_n$ with $|a-\zeta|<\delta,$
\begin{equation*}
E(a,r)\subset\{z\in\mathbb{B}_n: |z-\zeta|<\epsilon\}.
\end{equation*}

As a consequence, if $b\in\mathbb{B}_n$and $\zeta\in\mathbb{B}_n$ so that $E(b,r)\cap\{z\in\mathbb{B}_n: |z-\zeta|<\delta\}\neq\emptyset$ then $|b-\zeta|<\epsilon.$
\end{lemma}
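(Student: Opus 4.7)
The plan is to argue by contradiction, using compactness of $\overline{\mathbb{B}}_n$ together with the basic Möbius identity
\begin{equation*}
1-|\varphi_a(z)|^2 \;=\; \frac{(1-|a|^2)(1-|z|^2)}{|1-\langle z,a\rangle|^2},
\end{equation*}
which is the $n$-dimensional analogue of the Schwarz--Pick formula and appears among the identities in \cite[Theorem 2.2.2]{Rudin1980}.

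Suppose the first assertion fails. Then there exist $r\in(0,1)$ and $\epsilon>0$, together with sequences $\zeta_m\in\mathbb{S}_n$, $a_m\in\mathbb{B}_n$, and $z_m\in E(a_m,r)$, such that $|a_m-\zeta_m|<1/m$ while $|z_m-\zeta_m|\geq\epsilon$ for every $m$. After passing to subsequences I may assume $\zeta_m\to\zeta\in\mathbb{S}_n$ (by compactness of the sphere) and $z_m\to z_0\in\overline{\mathbb{B}}_n$ (by compactness of the closed ball); then also $a_m\to\zeta$, and the inequality $|z_0-\zeta|\geq\epsilon$ survives in the limit. Since $|a_m|\to 1$, the displayed identity combined with $|\varphi_{a_m}(z_m)|<r$ gives
\begin{equation*}
|1-\langle z_m,a_m\rangle|^2 \;<\; \frac{(1-|a_m|^2)(1-|z_m|^2)}{1-r^2} \;\longrightarrow\; 0,
\end{equation*}
so in the limit $\langle z_0,\zeta\rangle=1$. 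Since $|\zeta|=1$ and $|z_0|\leq 1$, Cauchy--Schwarz forces $z_0=\zeta$, contradicting $|z_0-\zeta|\geq\epsilon$. This produces the desired $\delta=\delta(r,\epsilon)$.

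For the consequence, I would simply pick any point $u$ in the nonempty intersection $E(b,r)\cap\{z:|z-\zeta|<\delta\}$ (taking $\zeta\in\mathbb{S}_n$, as the statement's ``$\zeta\in\mathbb{B}_n$'' appears to be a typo). The symmetry of the pseudo-hyperbolic metric $\rho(b,u)=\rho(u,b)<r$ means $b\in E(u,r)$, and then applying the first part to $a=u$ yields $E(u,r)\subset\{z:|z-\zeta|<\epsilon\}$, whence $|b-\zeta|<\epsilon$.

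I do not anticipate any genuine obstacle here; the only ``trick'' is recognising that once $|a_m|\to 1$, the ratio identity forces $|1-\langle z_m,a_m\rangle|\to 0$, and that this combined with $|\zeta|=1$ collapses the limit point $z_0$ onto $\zeta$ via Cauchy--Schwarz. The geometric content is that the pseudo-hyperbolic balls $E(a,r)$ shrink in Euclidean diameter as $a$ approaches the boundary, uniformly in the direction of approach, which is exactly what the compactness extraction captures.
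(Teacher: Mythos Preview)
Your argument is correct, but it takes a genuinely different route from the paper's. The paper proceeds constructively: it invokes the explicit description of $E(a,r)$ as an ellipsoid (from \cite[Section 2.2.7]{Rudin1980}) with center $c=\dfrac{1-r^2}{1-r^2|a|^2}\,a$ and semi-axes governed by $s=\dfrac{1-|a|^2}{1-r^2|a|^2}$, deduces $E(a,r)\subset B(a,2r\sqrt{s})$, and then bounds $s$ directly in terms of $\delta$ when $|a-\zeta|<\delta$. This yields the quantitative choice of any $\delta$ satisfying $2r\sqrt{2\delta/(1-r^2)}+\delta<\epsilon$. Your compactness-and-contradiction argument, by contrast, extracts $\delta$ non-constructively from the single identity $1-|\varphi_a(z)|^2=\dfrac{(1-|a|^2)(1-|z|^2)}{|1-\langle z,a\rangle|^2}$ together with the Cauchy--Schwarz collapse $\langle z_0,\zeta\rangle=1\Rightarrow z_0=\zeta$. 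Your approach is softer and avoids the detailed ellipsoid geometry, at the cost of losing the explicit dependence of $\delta$ on $r$ and $\epsilon$; since the lemma is only used qualitatively later in the paper, nothing is lost for the application. Your handling of the consequence (and the typo) matches the paper's exactly.
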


\begin{proof}
From \cite[Section 2.2.7]{Rudin1980}, for any $a\neq 0,$
\begin{equation*}
E(a,r) = \{z\in\mathbb{B}_n: \dfrac{|P z-c|^2}{r^{2}s^2}+\dfrac{|Q z|^2}{r^2s}<1\},
\end{equation*}
where $Pz=\dfrac{\langle z,a\rangle}{\langle a, a\rangle}a,\ Qz = z-Pz,\ c=\dfrac{1-r^2}{1-r^2|a|^2}a$ and $s=\dfrac{1-|a|^2}{1-r^2|a|^2}.$

It then follows that $E(a,r)\subset B(c, r\sqrt{s}).$

Since $|a-c|=r^2s|a|\leq r\sqrt{s},$ we get $B(c,r\sqrt{s})\subset B(a,2r\sqrt{s}).$ Hence $E(a,r)\subset B(a,2r\sqrt{s}).$ Note that the inclusion certainly holds for $a=0$ (in this case $s=1$).

Now suppose $|a-\zeta|<\delta.$ Then $|a|\geq |\zeta|-|a-\zeta|> 1-\delta.$ Hence,
\begin{equation*}
s = \dfrac{1-|a|^2}{1-r^2|a|^2} \leq \dfrac{1-|a|^2}{1-r^2} \leq\dfrac{2(1-|a|)}{1-r^2}<\dfrac{2\delta}{1-r^2}.
\end{equation*}

So for any $z\in E(a,r),$
\begin{equation*}
|z-\zeta| \leq |z-a| + |a-\zeta| \leq 2r\sqrt{s} + \delta < 2r\sqrt{\dfrac{2\delta}{1-r^2}}+\delta.
\end{equation*}

Choosing $\delta$ so that $2r\sqrt{\dfrac{2\delta}{1-r^2}}+\delta<\epsilon,$ we then have the first conclusion of the lemma.

Now suppose $a, b\in\mathbb{B}_n$ so that $|a-\zeta|<\delta$ and $a\in E(b,r).$ Then since $b\in E(a,r),$ the first conclusion of the lemma implies $|b-\zeta|<\epsilon.$
\end{proof}

For any $z\in B_n,$ the formula
\begin{equation*}
U_{z}(f) = (f\circ\varphi_z)k_z,\quad f\in L^2
\end{equation*}
defines a bounded operator on $L^2.$ It is well-known that $U_z$ is a unitary operator with $L^2_a$ as an invariant subspace and $U_{z}T_{f}U_{z}^{*}=T_{f\circ\varphi_{z}}$ for all $z\in B_n$ and all $f\in L^{\infty}.$ See, for example, \cite[Lemmas 7 and 8]{Lee}.

\begin{lemma}\label{lemma-1} For any sequence $\{z_m\}_{m=1}^{\infty}\subset\mathbb{B}_{n}$ with $|z_m|\rightarrow 1,\ U_{z_m}\rightarrow 0$ in the weak operator topology of $\mathfrak{B}(L^2_a).$
\end{lemma}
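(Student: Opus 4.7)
The plan is to exploit the unitarity $\|U_{z_m}\|=1$ together with a dense-set argument, choosing the dense set to be the linear span of reproducing kernels so that matrix coefficients can be computed explicitly.

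First I would observe that since each $U_{z_m}$ is unitary, the family $\{U_{z_m}\}$ is uniformly bounded, so WOT-convergence of $U_{z_m}$ to $0$ reduces to checking that $\langle U_{z_m}f,g\rangle\to 0$ for $f,g$ in a dense subset $D$ of $L^2_a$. An $\varepsilon/2$-approximation argument using $\|U_{z_m}\|\le 1$ then extends the conclusion to all of $L^2_a$.

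For $D$, I would take the linear span of the unnormalized reproducing kernels $K_a(w)=(1-\langle w,a\rangle)^{-n-1}$, $a\in\mathbb{B}_n$. Density is immediate from the reproducing property $\langle h,K_a\rangle=h(a)$: any $h\in L^2_a$ orthogonal to every $K_a$ vanishes identically. By sesquilinearity, it then suffices to prove $\langle U_{z_m}K_a,K_b\rangle\to 0$ for each fixed $a,b\in\mathbb{B}_n$. Using the reproducing property again,
\begin{equation*}
\langle U_{z_m}K_a,K_b\rangle=(U_{z_m}K_a)(b)=K_a(\varphi_{z_m}(b))\,k_{z_m}(b).
\end{equation*}

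Now I would estimate the two factors separately. Since $\varphi_{z_m}(b)\in\mathbb{B}_n$ and $|a|<1$,
\begin{equation*}
|K_a(\varphi_{z_m}(b))|=|1-\langle\varphi_{z_m}(b),a\rangle|^{-(n+1)}\le (1-|a|)^{-(n+1)},
\end{equation*}
a bound independent of $m$. For the second factor,
\begin{equation*}
|k_{z_m}(b)|\le (1-|z_m|^2)^{(n+1)/2}(1-|b|)^{-(n+1)}\longrightarrow 0
\end{equation*}
as $|z_m|\to 1$. Multiplying the two bounds gives $\langle U_{z_m}K_a,K_b\rangle\to 0$, which combined with the density argument from the first paragraph completes the proof.

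There is no real obstacle here; the only point that requires any care is the initial reduction to a dense set, which relies crucially on the uniform bound $\|U_{z_m}\|=1$. The choice of reproducing kernels as the dense set is what makes the matrix coefficient computation trivial.
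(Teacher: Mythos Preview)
Your proof is correct and follows essentially the same approach as the paper: reduce by density to the span of reproducing kernels, evaluate $\langle U_{z_m}K_a,K_b\rangle$ via the reproducing property, and use the crude bound $|1-\langle\cdot,\cdot\rangle|\ge 1-|a|$ (respectively $1-|b|$) on the two factors to extract the decaying factor $(1-|z_m|^2)^{(n+1)/2}$. The only cosmetic difference is that the paper works with the normalized kernels $k_z$ rather than $K_a$, which changes nothing.
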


\begin{proof}
Since span$(\{k_z: z\in\mathbb{B}_n\})$ is dense in $L^2_a,$ it suffices to show that $\lim\limits_{m\rightarrow\infty}\langle U_{z_m}k_z, k_w\rangle=0$ for all $z,w$ in $\mathbb{B}_n.$

Fix such $z, w\in\mathbb{B}_n.$ For each $m\geq 1,$
\begin{align*}
\langle U_{z_m}k_z, k_w\rangle & = (1-|w|^2)^{(n+1)/2}(U_{z_m}k_z)(w)\\
& = (1-|w|^2)^{(n+1)/2}k_z(\varphi_{z_m}(w))k_{z_m}(w)\\
& = \dfrac{\bigl((1-|w|^2)(1-|z|^2)(1-|z_m|^2)\bigr)^{(n+1)/2}}{\bigl((1-\langle\varphi_{z_{m}}(w),z\rangle)(1-\langle w, z_m\rangle\bigr)^{n+1}}.
\end{align*}

Since $|\langle\varphi_{z_m}(w),z\rangle|\leq |z|$ and $|\langle w, z_m\rangle|\leq |w|,$  we obtain
\begin{equation*}
|\langle U_{z_m}k_z, k_w\rangle| \leq \dfrac{\bigl((1-|w|^2)(1-|z|^2)(1-|z_m|^2)\bigr)^{(n+1)/2}}{\bigl((1-|z|)(1-|w|)\bigr)^{n+1}}.
\end{equation*}

It then follows that $\lim\limits_{m\rightarrow\infty}\langle U_{z_m}k_z, k_w\rangle = 0.$
\end{proof}

\begin{lemma}\label{lemma-3}
Let $\{z_{m}\}_{m=1}^{\infty}\subset\mathbb{B}_n$ so that $|z_m|\rightarrow 1$ as $m\rightarrow\infty.$ Let $S$ be any non-zero positive operator on $L^2_a.$ Suppose $A=\sum\limits_{m=1}^{\infty}U_{z_m}SU_{z_m}^{*}$ exists in the strong operator topology and is a bounded operator on $L^2_a$. Then there is a constant $c>0$ and an $f\in L^2_a$ so that $\|AU_{z_m}f\|\geq c>0$ for all $m.$
\end{lemma}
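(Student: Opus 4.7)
The plan is to exploit positivity of $S$ together with the fact that each $U_{z_m}$ is unitary and self-inverse (this last property is not strictly necessary but it streamlines the $k=m$ term). Since $S$ is positive and non-zero, I can pick $f \in L^2_a$ with $\langle Sf, f\rangle = \|S^{1/2}f\|^2 > 0$; this will be the witness vector, and the constant will simply be $c = \langle Sf, f\rangle / \|f\|$.

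The core of the argument is a two-line computation. Because $A = \sum_k U_{z_k}SU_{z_k}^*$ converges in SOT and each summand is positive, for any $m$ I have
\begin{equation*}
\langle A\, U_{z_m}f,\, U_{z_m}f\rangle \;=\; \sum_{k=1}^{\infty}\langle S\, U_{z_k}^{*}U_{z_m}f,\, U_{z_k}^{*}U_{z_m}f\rangle \;\geq\; \langle S U_{z_m}^{*}U_{z_m}f,\, U_{z_m}^{*}U_{z_m}f\rangle \;=\; \langle Sf, f\rangle,
\end{equation*}
where in the last equality I use that $U_{z_m}$ is unitary (so $U_{z_m}^{*}U_{z_m} = I$) and in the inequality I keep only the $k=m$ term and drop the remaining non-negative ones. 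Cauchy--Schwarz, together with $\|U_{z_m}f\| = \|f\|$, then gives
\begin{equation*}
\langle Sf, f\rangle \;\leq\; \langle A\, U_{z_m}f,\, U_{z_m}f\rangle \;\leq\; \|A\,U_{z_m}f\|\cdot\|f\|,
\end{equation*}
so $\|A\,U_{z_m}f\| \geq \langle Sf, f\rangle/\|f\| =: c > 0$ uniformly in $m$, which is the required conclusion.

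There is essentially no obstacle here: the hypothesis $|z_m|\to 1$ is used only implicitly, via its role in guaranteeing existence of $A$ in SOT, and the positivity of $S$ does all the work by making the tail of the series harmless rather than something to be estimated. The only small point to be careful about is that one cites the unitarity of $U_{z_m}$ on $L^2_a$ (noted in the paragraph preceding Lemma~\ref{lemma-1}) and the fact that a non-zero positive operator on a Hilbert space admits a vector $f$ with $\langle Sf, f\rangle > 0$, both of which are standard.
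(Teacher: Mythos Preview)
Your proof is correct and follows essentially the same route as the paper's: pick $f$ with $\langle Sf,f\rangle>0$, drop all but the $k=m$ term of the positive series to get $\langle A\,U_{z_m}f,\,U_{z_m}f\rangle\geq\langle Sf,f\rangle$, and conclude via Cauchy--Schwarz and $\|U_{z_m}f\|=\|f\|$. The only cosmetic difference is that the paper normalizes $\|f\|=1$ at the outset rather than dividing by $\|f\|$ at the end; note also that the property you actually use is unitarity ($U_{z_m}^*U_{z_m}=I$), not self-inverse, so that parenthetical remark is unnecessary.
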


\begin{proof}
Since $S$ is non-zero and positive, there is an $f\in L^{2}_a$ with $\|f\|=1$ so that $\langle Sf, f\rangle>0.$ For each $m\geq 1,$
\begin{align*}
\langle AU_{z_m}f, U_{z_m}f\rangle & \geq \langle U_{z_m}SU_{z_m}^{*}U_{z_m}f, U_{z_m}f\rangle\\
& = \langle SU_{z_m}^{*}U_{z_m}f, U_{z_m}^{*}U_{z_m}f\rangle\\
& = \langle Sf, f\rangle.
\end{align*}

Since $\|U_{z_m}f\|=1,$ it follows that $\|AU{z_m}f\|\geq\langle Sf, f\rangle>0$ for all $m\geq 1.$
\end{proof}


\section{BASIC RESULTS}\label{section-3}
The first result in this section shows that for a certain class of subsets $G$ of $L^{\infty},\ \mathfrak{I}(G)$ possesses a special property. This property will later help us distinguish $\mathfrak{I}(G_1)$ and $\mathfrak{I}(G_2)$ when $G_1\neq G_2.$

\begin{proposition}\label{prop-1}
Let $W$ be a subset of $\mathbb{B}_n$ and let $F={\rm cl}(W)\cap\mathbb{S}_{n}.$ Let $f$ be in $L^{\infty}$ so that $f$ vanishes almost everywhere in $\mathbb{B}_n\backslash W.$ Let $g_1,\ldots, g_l$ be any functions in $L^{\infty}.$ Let $\{z_m\}_{m=1}^{\infty}$ be any sequence in $\mathbb{B}_n$ so that $|z_m|\rightarrow 1$ and $|z_m-w|\geq\epsilon>0$ for all $w\in F,$ all $m\geq 1,$ where $\epsilon$ is a fixed constant. Then the sequence $\{T_{f}T_{g_1}\cdots T_{g_l}U_{z_m}\}_{m=1}^{\infty}$ converges to $0$ in the strong operator topology of $\mathfrak{B}(L^2_a).$ Consequently, if we set
\begin{equation*}
G=\{f\in L^{\infty}: f\text{ vanishes almost everywhere in }\mathbb{B}\backslash W\}
\end{equation*}
then for any $T\in\mathfrak{I}(G),\ TU_{z_m}\rightarrow 0$ in the strong operator topology of $\mathfrak{B}(L^2_a).$
\end{proposition}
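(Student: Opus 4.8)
The plan is to reduce the statement to a single key convergence: that $T_f U_{z_m} \to 0$ strongly, where $f$ vanishes a.e.\ off $W$. Once this is established, the full claim $T_f T_{g_1}\cdots T_{g_l} U_{z_m} \to 0$ follows immediately, since strong convergence is preserved under left multiplication by the fixed bounded operator $T_f T_{g_1}\cdots T_{g_{l}}$... wait, that is the wrong order. Rather, I would write $T_f T_{g_1}\cdots T_{g_l} U_{z_m} = T_f \cdot (T_{g_1}\cdots T_{g_l} U_{z_m})$ and instead prove directly that $T_f U_{z_m} \to 0$ and then note that $U_{z_m}^* T_{\bar g_l} \cdots T_{\bar g_1} T_{\bar f} \to 0$ strongly gives, upon taking adjoints, weak-$*$ type control. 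A cleaner route: show $(T_f T_{g_1}\cdots T_{g_l} U_{z_m})^* = U_{z_m}^* T_{\bar g_l}\cdots T_{\bar g_1} T_{\bar f}$; since each $T_{g}$ is bounded and $U_{z_m}^* = U_{z_m}$ is unitary, it suffices to show this adjoint sequence tends to $0$ weakly, or better, to show the original sequence tends to $0$ strongly by controlling $\|T_f T_{g_1}\cdots T_{g_l} U_{z_m} h\|$ for $h$ in a dense set. Because $\|T_{g_1}\cdots T_{g_l}\| \le \prod \|g_j\|_\infty$, it suffices to prove $\|T_f U_{z_m} k\| \to 0$ for every $k$ in a dense subset, applied with $k$ replaced by $T_{g_1}\cdots T_{g_l} U_{z_m} h$ — but that $k$ varies with $m$, so one actually needs $\|T_f\|$-uniform control. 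The correct reduction is therefore: prove $\|T_f U_{z_m} h\| \to 0$ for all $h$ in a dense set \emph{is not enough}; one must prove $T_f U_{z_m} \to 0$ in the strong operator topology as operators, then left-multiply by the \emph{fixed} operator $T_f T_{g_1}\cdots T_{g_l}$ — no. Let me state it properly: I will prove $\|U_{z_m}^* T_{g} h\| \to 0$ is false in general; instead the genuine content is that $T_{\bar f} U_{z_m} \to 0$ strongly, equivalently $U_{z_m}^* T_f \to 0$ strongly after adjoint, and since $T_{g_1}\cdots T_{g_l} U_{z_m}$ is uniformly bounded, left multiplication of the strongly-convergent-to-zero sequence $T_f(\,\cdot\,)$ is \emph{not} automatic. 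So the honest plan: establish $\|T_f U_{z_m} h\| \to 0$ for each fixed $h$, then bootstrap to the product by writing $T_f T_{g_1}\cdots T_{g_l} U_{z_m} h = T_f \bigl(T_{g_1}\cdots T_{g_l} U_{z_m} h\bigr)$ and using that $U_{z_m} \to 0$ weakly (Lemma~\ref{lemma-1}) together with compactness-type arguments on $T_{g_1}\cdots T_{g_l}$ is false too. The clean fix is to prove the \textbf{key lemma}: for $f\in L^\infty$ vanishing a.e.\ off $W$, and $\{z_m\}$ as in the hypothesis, $T_f U_{z_m} \to 0$ strongly; and separately, $T_{g} U_{z_m} - U_{z_m} T_{g\circ\varphi_{z_m}}$, using $U_{z_m} T_h U_{z_m}^* = T_{h\circ\varphi_{z_m}}$, i.e.\ $T_g U_{z_m} = U_{z_m} T_{g\circ\varphi_{z_m}}$. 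Hence
\[
T_f T_{g_1}\cdots T_{g_l} U_{z_m} = T_f U_{z_m} T_{g_1\circ\varphi_{z_m}}\cdots T_{g_l\circ\varphi_{z_m}},
\]
and since $\|T_{g_1\circ\varphi_{z_m}}\cdots T_{g_l\circ\varphi_{z_m}}\| \le \prod\|g_j\|_\infty$ is uniformly bounded, strong convergence $T_f U_{z_m}\to 0$ composed on the right with a uniformly bounded sequence still gives strong convergence to $0$. This is the structural backbone.

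So everything comes down to: $T_f U_{z_m} \to 0$ strongly when $f$ vanishes a.e.\ on $\mathbb{B}_n\setminus W$ and $\operatorname{dist}(z_m, F) \ge \epsilon$ with $F = \mathrm{cl}(W)\cap\mathbb{S}_n$. I would compute $T_f U_{z_m} h = P(f\cdot (h\circ\varphi_{z_m}) k_{z_m})$ and estimate its norm. The idea: decompose $W$ (equivalently, the support of $f$) according to distance from the sphere. The part of $f$ supported on $\{|w| \le 1 - t\}$, a compact subset of the ball, yields a compact Toeplitz operator, and for a compact operator $K$, $\|K U_{z_m}\| \to 0$ follows from $U_{z_m}\to 0$ weakly (Lemma~\ref{lemma-1}) plus compactness (a compact operator maps weakly convergent sequences to norm convergent ones; applied to $K U_{z_m} h = K(U_{z_m} h)$ with $U_{z_m}h \to 0$ weakly). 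For the remaining part, $f$ is supported near $\mathbb{S}_n$; there the geometry enters: points $w$ near $\mathbb{S}_n$ lying in $W$ must, in the limit, lie in $F$, hence stay a definite Euclidean distance from the $z_m$. Combined with the kernel $(1-\langle z, w\rangle)^{-n-1}$ in the formula for $P$ and the explicit form of $k_{z_m}$, this distance lower bound should force the relevant integrals to be small. I expect to use Lemma~\ref{lemma-2}, or rather its contrapositive spirit, to convert "$|z_m - w| < \epsilon$ fails for $w\in F$" into control on $\rho(z_m, w)$ for $w$ in the support of $f$ near the boundary; and to use that $\|f\cdot(h\circ\varphi_{z_m})k_{z_m}\|_{L^2}$ is small because $h\circ\varphi_{z_m}$, after change of variable, concentrates its mass near $z_m$ (which is far from $\mathrm{supp} f$).

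The main obstacle is precisely this last estimate — showing the "boundary part" contributes negligibly. The subtlety is that $f$ may be supported on a set $W$ that accumulates on \emph{all} of $\mathbb{S}_n$ except that near $z_m$ it is thin; one cannot simply split off a compact piece uniformly in $m$. I would handle it with an $\epsilon/3$ argument: given $\eta > 0$, first choose $t$ close to $1$ so that the tail piece of $\|h\|$ coming from $\{|w| > t\}$ under the relevant measure is $< \eta$ — but $h$ is fixed while the measure moves with $m$. More robustly: write $f = f\chi_{\{|w|\le t\}} + f\chi_{\{|w|>t\}}$, handle the first via compactness as above (this gives a term $\to 0$ as $m\to\infty$ for each fixed $t$), and for the second bound $\|T_{f\chi_{\{|w|>t\}}} U_{z_m} h\| \le \|T_{f\chi_{\{|w|>t\}}}\|\cdot\|h\|$ — useless since that norm need not be small. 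The real argument must exploit that $U_{z_m}h$, as a function, is $(h\circ\varphi_{z_m})k_{z_m}$, whose $L^2$-mass on the region $\{w : |w - \zeta| > \epsilon/2 \text{ for the relevant } \zeta\}$ — i.e.\ away from where $\varphi_{z_m}$ sends the bulk of $h$ — is small. Concretely $\int_{|\varphi_{z_m}(w)| \ge s} |U_{z_m}h(w)|^2\,d\nu(w) = \int_{|u|\ge s}|h(u)|^2\,d\nu(u) \to 0$ as $s\to 1$, uniformly in $m$; and the support of $f$ near $\mathbb{S}_n$, being eventually within $\epsilon/2$ of $F$ hence at Euclidean distance $\ge \epsilon/2$ from $z_m$, is (by Lemma~\ref{lemma-2} type estimates) contained in $\{w : \rho(z_m, w) \ge s_0\} = \{w : |\varphi_{z_m}(w)| \ge s_0\}$ for some $s_0 < 1$ not depending on $m$. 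That is the crux: translate the \emph{Euclidean} separation $|z_m - F|\ge\epsilon$ into a \emph{pseudohyperbolic} separation $\rho(z_m, w)\ge s_0$ for $w$ in the boundary part of $\mathrm{supp}\,f$, then the mass estimate finishes it. The remaining calculation — assembling the $\eta$'s — is routine once these two ingredients are in place.
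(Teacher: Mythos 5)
Your key lemma --- that $T_fU_{z_m}\to 0$ in the strong operator topology --- is true, and your sketch of it is viable: split $f$ into a piece supported in $\{|w|\le t\}$ (compact Toeplitz operator, handled by Lemma \ref{lemma-1} since compact operators send the weakly null sequence $U_{z_m}h$ to norm-null), plus a boundary piece whose support lies, for $t$ close to $1$, within a small Euclidean neighborhood of $F$, hence at Euclidean distance $\ge\epsilon/2$ from every $z_m$, hence (via Lemma \ref{lemma-2}) at pseudohyperbolic distance $\ge s_0$ from $z_m$ uniformly in $m$; the change of variables $u=\varphi_{z_m}(w)$ then bounds that contribution by $\|f\|_\infty\bigl(\int_{|u|\ge s_0}|h|^2\,d\nu\bigr)^{1/2}$. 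The failure is in the reduction of the proposition to this lemma. The identity $T_gU_{z_m}=U_{z_m}T_{g\circ\varphi_{z_m}}$ is fine, but the principle you then invoke --- $A_m\to 0$ strongly and $\sup_m\|B_m\|<\infty$ imply $A_mB_m\to 0$ strongly --- is false in general (on $\ell^2$ take $A_mx=\langle x,e_m\rangle e_1$ and $B_m$ the unitary swapping $e_1$ and $e_m$; then $A_mB_me_1=e_1$). This is exactly the ``$m$-dependent vector'' problem you raised and then set aside: strong convergence of $T_fU_{z_m}$ says nothing about $T_fU_{z_m}\bigl(T_{g_1\circ\varphi_{z_m}}\cdots T_{g_l\circ\varphi_{z_m}}h\bigr)$, and norm convergence is unavailable because $U_{z_m}$ is unitary, so $\|T_fU_{z_m}\|=\|T_f\|$ for every $m$. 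Nor does your two-piece estimate push through the middle factors: the compact piece survives (since $T_{g_1}\cdots T_{g_l}U_{z_m}h\to 0$ weakly), but the boundary piece would require the $L^2$-mass of $T_{g_1}\cdots T_{g_l}U_{z_m}h$ on $\{w:\rho(z_m,w)\ge s_0\}$ to be small uniformly in $m$, and Toeplitz operators with arbitrary $L^\infty$ symbols do not preserve that kind of localization.

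What is missing is a mechanism for transporting the localization of $f$ past $T_{g_1},\ldots,T_{g_l}$, and this is precisely the heart of the paper's argument. There one takes a cutoff $\eta\in C(\overline{\mathbb{B}}_n)$ equal to $1$ on a small closed Euclidean neighborhood of $F$ and $0$ outside a slightly larger one. Since $\eta$ is continuous up to the boundary, Coburn's theorem makes the semicommutators $T_gT_{1-\eta}-T_{g(1-\eta)}$ compact, which lets one write $T_fT_{g_1}\cdots T_{g_l}=T_fT_{g_1}\cdots T_{g_l}T_\eta+K$ with $K$ compact; here $T_{f(1-\eta)}$ is itself compact because $f(1-\eta)$ vanishes a.e.\ off a compact subset of $\mathbb{B}_n$ (this is where ${\rm cl}(W)\cap\mathbb{S}_n=F$ enters). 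Then $T_\eta U_{z_m}\to 0$ strongly by a direct kernel estimate --- $|k_{z_m}|$ is uniformly small on the neighborhood of $F$ because $z_m$ stays Euclidean distance $\ge\epsilon$ from $F$ --- and $KU_{z_m}\to 0$ strongly by Lemma \ref{lemma-1}, so the whole product converges strongly to $0$ with the cutoff sitting next to $U_{z_m}$, where your pointwise estimates actually apply. Some ingredient of this kind (compactness of semicommutators with a boundary-continuous cutoff, or an equivalent device) is indispensable for $l\ge 1$; your proposal as written does not contain it, so the proof has a genuine gap even though its second half is sound.
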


\begin{proof}
Let $V_1=\{|z|\leq 1: |z-w|<\epsilon/3\text{ for some }w\in F\}$ and $V_2=\{|z|\leq 1: |z-w|<\epsilon/2\text{ for some }w\in F\}.$ Let $\eta$ be a continuous function on $\overline{\mathbb{B}}_n$ so that $0\leq\eta\leq 1,\ \eta(z)=1$ if $z\in {\rm cl}(V_1)$ and $\eta(z)=0$ if $z\notin V_2.$ Let $Z={\rm cl}(W)\cap(\overline{\mathbb{B}}_n\backslash V_1).$ Then $Z\subset\overline{\mathbb{B}}_n$ and $Z$ is compact with respect to the Euclidean metric. We have
\begin{equation*}
Z\cap\mathbb{S}_{n} = {\rm cl}(W)\cap\mathbb{S}_n\cap(\overline{\mathbb{B}}_n\backslash V_1) = F\cap(\overline{\mathbb{B}}_n\backslash V_1) = \emptyset.
\end{equation*}
Thus $Z$ is a compact subset of $\mathbb{B}_{n}.$ Since the function $f(1-\eta)$ vanishes almost everywhere in $(\mathbb{B}_n\backslash W)\cup {\rm cl}(V_1)$ which contains $\mathbb{B}_n\backslash Z,$ the operator $T_{f(1-\eta)}$ is compact.

Since $T_{g}T_{1-\eta}-T_{g(1-\eta)}$ and $T_{1-\eta}T_{g}-T_{g(1-\eta)}$ are compact for all $g\in L^{\infty}$ (see \cite{Coburn}), we have
\begin{align}\label{eqn-1}
T_{f}T_{g_1}\cdots T_{g_l}
& = T_{f}T_{g_1}\cdots T_{g_l}T_{\eta} + T_{f}T_{g_1}\cdots T_{g_l}T_{1-\eta}\notag\\
& = T_{f}T_{g_1}\cdots T_{g_l}T_{\eta} + T_{f(1-\eta)}T_{g_1}\cdots T_{g_l} + K_1\\
& = T_{f}T_{g_1}\cdots T_{g_l}T_{\eta} + K,\notag
\end{align}
where $K_1$ is a compact operator and $K = T_{f(1-\eta)}T_{g_1}\cdots T_{g_l} + K_1$ is also a compact operator.

For any $h\in L^2_a\cap L^{\infty}$ and any $m\geq 1$ we have
\begin{align*}
\|T_{\eta}U_{z_m}h\|^2 & \leq \|\eta U_{z_m}h\|^2\\
& \leq \int\limits_{V_2}|(U_{z_m}h)(z)|^{2}\ {\rm d}\nu(z)\\
& = \int\limits_{V_2}|h(\varphi_{z_m}(z))k_{z_m}(z)|^2\ {\rm d}\nu(z)\\
& \leq \|h\|^2_{\infty}\int\limits_{V_2}|k_{z_m}(z)|^2\ {\rm d}\nu(z).
\end{align*}

Let $V_3=\{|z|\leq 1: |z-w|<\epsilon\text{ for some }w\in F\}.$ Since the map $(z,w)\mapsto |1-\langle z,w\rangle|$ is continuous and does not vanish on the compact set ${\rm cl}(V_2)\times (\overline{\mathbb{B}}_n\backslash V_3),$ there is a $\delta>0$ so that $|1-\langle z,w\rangle|\geq\delta$ for all $z\in {\rm cl}(V_2)$ and $w\in (\overline{\mathbb{B}}_n\backslash V_3).$

For each $m\geq 1,\ z_m\in (\mathbb{B}_n\backslash V_3),$ so for all $z\in V_2,$
\begin{equation*}
|k_{z_m}(z)| = \dfrac{(1-|z_m|^2)^{(n+1)/2}}{|1-\langle z, z_m\rangle|^{n+1}} \leq \dfrac{(1-|z_m|^2)^{(n+1)/2}}{\delta^{n+1}}.
\end{equation*}

Hence we have
\begin{equation*}
\|T_{\eta}U_{z_m}h\|\leq \|h\|_{\infty}\sqrt{\nu(V_2)}\dfrac{(1-|z_m|^2)^{(n+1)/2}}{\delta^{n+1}}.
\end{equation*}

This implies $\|T_{\eta}U_{z_m}h\|\rightarrow 0$ as $m\rightarrow\infty.$ Since $L^2_a\cap L^{\infty}$ is dense in $L^2_a$ and $\|T_{\eta}U_{z_m}\|\leq\|T_{\eta}\|\leq 1$ for all $m,$ we conclude that $T_{\eta}U_{z_m}\rightarrow 0$ in the strong operator topology of $\mathcal{B}(L^2_a).$ So $T_{f}T_{g_1}\cdots T_{g_l}T_{\eta}U_{z_m}\rightarrow 0$ in the strong operator topology of $\mathfrak{B}(L^2_a).$ Also by Lemma \ref{lemma-1}, $U_{z_m}\rightarrow 0$ in the weak operator topology, so $KU_{z_m}\rightarrow 0$ in the strong operator topology for any compact operator $K.$ Combining these facts with \eqref{eqn-1}, we conclude that $T_{f}T_{g_1}\cdots T_{g_l}U_{z_m}\rightarrow 0$ in the strong operator topology of $\mathfrak{B}(L^2_a).$
\end{proof}


The following proposition was proved by Su{\'a}rez for the case $n=1$ (see \cite[Proposition 2.9]{Suarez2004}). The case $n\geq 2$ is similar and can be proved with the same method. The point is that for $n\geq 2,$ the metric $\rho$ and the reproducing kernel functions have all the properties needed for Su{\'a}rez's proof. See \cite[Section 2]{Le2006} for more details.

\begin{proposition}\label{prop-4} Let $0<r<1$ and $\{w_{m}\}_{m=1}^{\infty}$ be a sequence in $\mathbb{B}_n$ so that $E(w_{k},r)\cap E(w_{l},r)=\emptyset$ for all $k\neq l.$ For each $m\in\mathbb{N}$, let $c_{m}^{1}, \ldots, c_{m}^{l},$ $a_m, b_{m},$ $d_{m}^{1}, \ldots, d_{m}^{k}\in L^{\infty}$ be functions of norm $\leq 1$ that vanish almost everywhere on $\mathbb{B}_n\backslash E(w_{m},r).$ Then
\begin{equation*}
\sum\limits_{m\in\mathbb{N}}T_{c_{m}^1}\ldots T_{c_{m}^{l}}(T_{a_m}T_{b_m}-T_{b_m}T_{a_m})T_{d_{m}^{1}}\cdots T_{d_{m}^{k}}
\end{equation*}
belongs to $\mathfrak{CI}(L^{\infty})$.
\end{proposition}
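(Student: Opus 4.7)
The plan is to exhibit the given sum as the diagonal part of a single ``aggregate'' element of $\mathfrak{CI}(L^\infty)$, with the off-diagonal remainder also lying in $\mathfrak{CI}(L^\infty)$. Since the balls $E(w_m,r)$ are pairwise disjoint and each of $c_m^i, a_m, b_m, d_m^j$ has $L^\infty$-norm at most $1$ and is supported in $E(w_m,r)$, the pointwise sums $A = \sum_m a_m$, $B = \sum_m b_m$, $C_i = \sum_m c_m^i$, $D_j = \sum_m d_m^j$ all define elements of $L^\infty$ with norm at most $1$. The aggregate operator
\[
\mathcal{A} := T_{C_1}\cdots T_{C_l}\,(T_A T_B - T_B T_A)\,T_{D_1}\cdots T_{D_k}
\]
therefore belongs to $\mathfrak{CI}(L^\infty)$ directly from the definition of this two-sided ideal.

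The next step is to expand $\mathcal{A}$ using $T_A = \sum_m T_{a_m}$ (and analogously for $B, C_i, D_j$), which converges in the strong operator topology by the disjointness of supports and the boundedness of the Bergman projection. The ``diagonal'' part of the expansion, where all multi-indices coincide with a single $m$, is precisely the sum in the statement. For the ``off-diagonal'' part, the crucial observation is pairing: whenever the $a$- and $b$-indices disagree, exchanging $(j,k) \leftrightarrow (k,j)$ combines the contributions from $T_A T_B$ and $T_B T_A$ into a genuine commutator
\[
[T_{a_j}, T_{b_k}] + [T_{a_k}, T_{b_j}],
\]
which, sandwiched by any fixed Toeplitz operators coming from the $c$'s and $d$'s, lies in $\mathfrak{CI}(L^\infty)$. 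The residual cases where only the $c$- or $d$-indices disagree reduce similarly to Toeplitz sandwiches of $[T_{a_j}, T_{b_j}]$ and thus lie in the ideal as well.

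The main obstacle is reconciling this formal SOT expansion with membership in $\mathfrak{CI}(L^\infty)$, which is closed in operator norm but not in SOT. The plan is to work with truncations $m \leq N$: each truncation lies in $\mathfrak{CI}(L^\infty)$ by the finite version of the argument above, and convergence to the full sum must then be verified \emph{in operator norm} as $N \to \infty$. This is where the disjointness of $\{E(w_m,r)\}$ and the reproducing-kernel estimates from Section~\ref{section-2} do the essential work: they yield quasi-orthogonality bounds on cross terms $\|T_{f_j}T_{g_k}\|$ when $j \neq k$, whose summation controls the operator norms of the off-diagonal tails and lets one pass to the limit inside $\mathfrak{CI}(L^\infty)$. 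This kernel-based tail estimate, performed exactly as in Su\'arez's one-dimensional proof of \cite[Proposition 2.9]{Suarez2004}, is the technical heart of the argument and the step where carrying the $n = 1$ proof over to $n \geq 2$ must be verified using only the properties of $\rho$ and $k_z$ noted in the remarks preceding the proposition.
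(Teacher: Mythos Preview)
Your outline matches the paper's approach, which is simply to invoke Su\'arez's proof of \cite[Proposition~2.9]{Suarez2004} and observe that the properties of $\rho$ and $k_z$ needed there carry over to $n\ge 2$; the paper gives no further details beyond Remark~\ref{remark-1}, which records that the argument uses only Toeplitz operators whose symbols are partial sums $\sum_{m\in F}f_m$ over subsets $F\subset\mathbb{N}$, exactly the aggregate symbols you form. Your sketch of the aggregate--diagonal--off-diagonal decomposition and the appeal to kernel estimates for the cross terms is consistent with that remark, so there is nothing to add by way of comparison.
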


\begin{remark}\label{remark-1} In the proof of Proposition \ref{prop-4}, we work only with Toeplitz operators with symbols in the set $G$ which consists of functions of the form $\sum\limits_{m\in F}f_{m},$ where $F$ is a subset of $\mathbb{N}$ and $f$ is one of the symbols $c^{1},\ldots, c^{l},$ $a, b,$ $d^{1}, \ldots, d^{k}.$ So in the conclusion of the proposition, we may replace $\mathfrak{CI}(L^{\infty})$ by $\mathfrak{CI}(G).$
\end{remark}

\section{PROOF OF THE MAIN THEOREM}\label{section-4}
Now we are ready for the proof of our main result.

Fix $0<r<1.$ Let $W_{\emptyset}=E(0,r).$ For any closed non-empty subset $F$ of $\mathbb{S}_n,$ let
\begin{equation*}
W_{F} = \bigcup\limits_{0<t<1}\bigcup\limits_{\zeta\in F}E(t\zeta, r).
\end{equation*}

It is clear that $W_{\mathbb{S}_n}=\mathbb{B}_n.$ We always have $F\subset{\rm cl}(W_{F})\cap\mathbb{S}_n$. We will show that in fact $F = {\rm cl}(W_{F})\cap \mathbb{S}_n.$ Suppose $\zeta\in {\rm cl}(W_{F})\cap\mathbb{S}_n.$ For any $m\geq 1,$ applying Lemma \ref{lemma-2} with $\epsilon=\dfrac{1}{m},$ we get a $\delta_m>0$ so that if $b\in\mathbb{B}_n$ and $E(b,r)\cap V_{m}\neq\emptyset$, where $V_{m}=\{z\in\mathbb{B}_n: |z-\zeta|<\delta_m\},$ then $|b-\zeta|<\dfrac{1}{m}.$ Since $W_{F}\cap V_{m}\neq\emptyset,$ there is $t_m\in (0,10)$ and $\zeta_m\in F$ so that $E(t_m\zeta_m, r)\cap V_{m}\neq\emptyset.$ Hence $|t_m\zeta_m - \zeta|<\dfrac{1}{m}.$ So $|t_m\zeta_m-\zeta|\rightarrow 0$ as $m\rightarrow\infty.$ Since $t_m = |t_m\zeta_m|\rightarrow |\zeta|=1$ as $m\rightarrow\infty,$ we get $\zeta_m = t^{-1}_{m}(t_m\zeta_m)\rightarrow \zeta$ in the Euclidean metric as $m\rightarrow\infty.$ This implies that $\zeta\in F.$ Thus, ${\rm cl}(W_{F})\cap\mathbb{S}_n \subset F.$

Now define
\begin{equation*}
G_{F} = \{f\in L^{\infty}: f\text{ vanishes almost everywhere in }\mathbb{B}_n\backslash W_{F}\}.
\end{equation*}

It is clear that if $F_1\subset F_2$ then $W_{F_1}\subset W_{F_2},$ hence $G_{F_1}\subset G_{F_2}.$

Since $T_f$ is compact for all $f\in G_{\emptyset},\ \mathfrak{I}(G_{\emptyset})=\mathcal{K}.$ Since $G_{\mathbb{S}_n}=L^{\infty},$ we have $\mathfrak{CI}(G_{\mathbb{S}_n}\cap\ C(\mathbb{B}_n))=\mathfrak{CI}(L^{\infty}\cap\mathbb{B}_n))=\mathfrak{T}.$

Now suppose $F_1$ and $F_2$ are closed subsets of $\mathbb{S}_n$ so that $F_2\backslash F_1\neq\emptyset.$ Let $\zeta\in F_2$ so that $\zeta\notin F_1.$ From Lemma \ref{lemma-4} there is a sequence $\{t_m\}_{m=1}^{\infty}\subset (0,1)$ with $t_m\uparrow 1$ and $E(t_k\zeta, r)\cap E(t_l\zeta, r)=\emptyset$ for all $k\neq l.$ Let $z_m=t_m\zeta$ for all $m\geq 1.$ Since $|z_m-\zeta|\rightarrow 0$ and $\zeta\notin F_1$ which is a closed subset of $\mathbb{S}_n$, there is an $\epsilon>0$ so that $|z_m-w|\geq\epsilon$ for all $w\in F_1,$ all $m\geq 1.$ Since ${\rm cl}(W_{F_1})\cap\mathbb{S}_n = F_1,$ Proposition \ref{prop-1} shows that $TU_{z_m}\rightarrow 0$ in the strong operator topology for all $T\in\mathfrak{I}(G_{F_1}).$

Take $f$ to be any continuous function supported in $E(0,r)$ such that $[T_f, T_{\overline{f}}]\neq 0.$ Any function of the form $f(z)=z_1\eta(|z|/r)$ where $\eta$ is non-negative, continuous and supported in $[0,1]$ with $\|\eta\|_{\infty}>0$ will work. Let $S=[T_{f}, T_{\overline{f}}]^2$ then $S$ is a non-zero, positive operator on $L^2_a.$ Define
\begin{equation*}
T = \sum\limits_{m=1}^{\infty} U_{z_m}SU^{*}_{z_m}.
\end{equation*}

By Lemma \ref{lemma-3}, there is a constant $c>0$ and an $h\in L^2_a$ so that $\|TU_{z_m}h\|\geq c$ for all $m.$ This implies that $T\notin\mathfrak{I}(G_{F_1}).$

For each $m,$
\begin{align*}
U_{z_m}[T_{f}, T_{\overline{f}}]U^{*}_{z_m}
& = U_{z_m}T_{f}T_{\overline{f}}U^{*}_{z_m} - U_{z_m}T_{\overline{f}}T_{f}U^*_{z_m}\\
& = T_{f\circ\varphi_{z_m}}T_{\overline{f}\circ\varphi_{z_m}} - T_{\overline{f}\circ\varphi_{z_m}}T_{f\circ\varphi_{z_m}}\\
& = [T_{f\circ\varphi_{z_m}}, T_{\overline{f}\circ\varphi_{z_m}}].
\end{align*}

So $U_{z_m}SU^{*}_{z_m} = [T_{f\circ\varphi_{z_m}}, T_{\overline{f}\circ\varphi_{z_m}}]^2.$ Hence $T = \sum\limits_{m=1}^{\infty}[T_{f\circ\varphi_{z_m}}, T_{\overline{f}\circ\varphi_{z_m}}]^2.$

Since each $f\circ\varphi_{z_m}$ is supported in $\{w\in\mathbb{B}_n: |\varphi_{z_m}(w)|<r\}=E(z_{m},r)$ and $E(z_k,r)\cap E(z_l,r)=\emptyset$ for all $k\neq l$, Proposition \ref{prop-4} and Remark \ref{remark-1} show that $T\in\mathfrak{CI}(G_{F_2}\cap C(\mathbb{B}_n)).$ So $T\in\mathfrak{CI}(G_{F_2})\backslash\mathfrak{I}(G_{F_1}).$

\end{document}